\setlist[enumerate]{label={\upshape(\alph*)}}
\tikzstyle{vertex}=[circle, draw, inner sep=0pt, minimum size=5pt,fill=black]
\newcommand{\vertex}{\node[vertex]}
\tikzstyle{redvertex}=[rectangle, draw, inner sep=0pt, minimum size=5pt,fill=red]
\newcommand{\redvertex}{\node[redvertex]}
\tikzstyle{bluevertex}=[circle, draw, inner sep=0pt, minimum size=5pt,fill=white]
\newcommand{\bluevertex}{\node[bluevertex]}
\newtheorem{theorem}{Theorem}[section]
\newtheorem{lemma}[theorem]{Lemma}
\newtheorem{corollary}[theorem]{Corollary}
\theoremstyle{definition}
\newtheorem{definition}[theorem]{Definition}
\newtheorem{conjecture}[theorem]{Conjecture}
\theoremstyle{remark}
\newtheorem{remark}[theorem]{Remark}
\begin{document}

\title{On the Average (Edge-)Connectivity of Minimally $k$-(Edge-)Connected Graphs}
\author{Lucas Mol\thanks{Supported by an NSERC Grant CANADA, Grant number RGPIN-2021-04084}  and Ortrud R. Oellermann\thanks{Supported by an NSERC Grant CANADA, Grant number RGPIN-2016-05237}\\
University of Winnipeg\\
515 Portage Ave.\ Winnipeg, MB, Canada R3B 2E9\\
\small \href{mailto:l.mol@uwinnipeg.ca}{l.mol@uwinnipeg.ca},
\href{mailto:o.oellermann@uwinnipeg.ca}{o.oellermann@uwinnipeg.ca}\\
Vibhav Oswal \thanks{Supported by MITACS Globalink Scholarship}\\
BITS Pilani \\
Pilani, Rajasthan, India 333031\\
\small \href{mailto:vibhavoswal3@gmail.com}{vibhavoswal3@gmail.com}}

\date{}
\maketitle

\begin{abstract}
Let $G$ be a graph of order $n$ and let $u,v$ be vertices of $G$. Let $\kappa_G(u,v)$ 
denote the maximum number of internally disjoint 
$u$--$v$ paths in $G$. Then the \emph{average connectivity} $\overline{\kappa}(G)$ of $G$,  is defined as
$
\overline{\kappa}(G)=\sum_{\{u,v\}\subseteq V(G)} \kappa_G(u,v)/\tbinom{n}{2}. $ If $k \ge 1$ is an integer, then  $G$ is {\em minimally $k$-connected}  if $\kappa(G)=k$ and $\kappa(G-e) < k$ for every edge $e$ of $G$. 
We say that $G$ is an \emph{optimal} minimally $k$-connected graph if $G$ has maximum average connectivity among all minimally $k$-connected graphs of order $n$. 
Based on a recent structure result for minimally 2-connected graphs we conjecture that, for every integer $k \ge3$, if $G$ is an optimal minimally $k$-connected graph of order $n\geq 2k+1$, then $G$
is bipartite, with the set of vertices of degree $k$ and the set of vertices of degree exceeding $k$ as its partite sets. We show that if this conjecture is true, then $\overline{\kappa}(G)< \frac{9}{8}k$ for every minimally $k$-connected graph $G$.
For every $k \ge 3$, we describe an infinite family of minimally $k$-connected graphs whose average connectivity is asymptotically $\frac{9}{8}k$. Analogous results are established for the average edge-connectivity of minimally $k$-edge-connected graphs.

\noindent{\bf AMS Subject Classification:} 05C40, 05C75, 05C35\\
{\bf Key Words:} minimally $k$-(edge-)connected,  maximum average (edge-)connectivity
\end{abstract}

\section{Introduction}\label{Intro}

Let $G$ be a nontrivial graph.  The {\em connectivity} of $G$, denoted  by $\kappa(G)$, is the smallest number of vertices whose removal disconnects $G$ or produces a trivial graph. The {\em edge-connectivity} of $G$, denoted by $\lambda(G)$, is the smallest number of edges whose removal disconnects $G$ or produces a trivial graph.

Following Beineke, Oellermann, and Pippert~\cite{BeinekeOellermannPippert2002}, for a pair $u,v$ of distinct vertices of $G$, we define the \emph{connectivity} between $u$ and $v$ in $G$, denoted by $\kappa_G(u,v)$, to be the maximum number of internally disjoint $u$--$v$ paths.  The \emph{edge-connectivity} between $u$ and $v$, denoted by $\lambda_G(u,v)$, is the maximum number of edge-disjoint $u$--$v$ paths. Menger's well-known theorem~\cite{Menger1927} states that if $u$ and $v$ are non-adjacent, then $\kappa_G(u,v)$ equals the smallest number of vertices whose removal from $G$ separates $u$ and $v$.  The edge-connectivity version of Menger's theorem states that $\lambda_G(u,v)$ equals the minimum number of edges whose removal from $G$ separates $u$ and $v$. When $G$ is clear from context we omit the subscript $G$ from $\kappa_G(u,v)$ and $\lambda_G(u,v)$.  

Whitney \cite{Whitney1932} showed that $\kappa(G) = \min\{\kappa(u,v)\ |\ u,v \in V(G)\}$. In a similar manner it follows that $\lambda(G) = \min\{\lambda(u,v)\ |\ u,v \in V(G)\}$.  These results show that both the connectivity and the edge-connectivity of a graph are worst-case measures. A more refined measure of the overall level of connectedness of a graph was introduced by Beineke, Oellermann, and Pippert~\cite{BeinekeOellermannPippert2002}, and is based on the average values of the `local connectivities' between all pairs of vertices.  The {\em average connectivity} of a graph $G$ of order $n$, denoted by $\overline{\kappa}(G)$, is the average of the connectivities over all pairs of distinct vertices of $G$.  That is,
\[
\overline{\kappa}(G)= \sum_{\{u,v \}\subseteq V(G)}\kappa(u,v)/\tbinom{n}{2}.
\]
Analogously, the {\em average edge-connectivity} of $G$, studied by Dankelmann and Oellermann~\cite{DankelmannOellermann2005}, and denoted by $\overline{\lambda}(G)$, is the average of the edge-connectivities over all pairs of distinct vertices of $G$. That is,
\[
\overline{\lambda}(G)= \sum_{\{u,v \}\subseteq V(G)}\lambda(u,v)/\tbinom{n}{2}.
\]

Several bounds for the average connectivity in terms of various graph parameters, such as for example, the order and size~\cite{BeinekeOellermannPippert2002}, the average degree~\cite{DankelmannOellermann2003}, and the matching number~\cite{KimO2013} have been determined.  Bounds  on the average connectivity of graphs belonging to particular families have also been established, including bounds for planar and outerplanar graphs~\cite{DankelmannOellermann2003}, Cartesian product graphs~\cite{DankelmannOellermann2003}, strong product graphs~\cite{Abajo2013}, and regular graphs~\cite{KimO2013}.  The average connectivity also plays a role in the assessment of the reliability of real-world networks, including street networks~\cite{Boeing2017} and communication networks~\cite{Rak2015}.

In this paper we study by how much the average (edge-)connectivity can vary in a class of graphs, whose members are in some sense just barely $k$-(edge-)connected for some integer $k \ge 1$.
A graph $G$ is called {\em minimally $k$-connected} if $\kappa(G)=k$ and $\kappa(G-e) < k$ for every edge $e$ of $G$.   {\em  Minimally $k$-edge-connected}  graphs are defined similarly.  It is natural to ask by how much the average (edge-)connectivity of a minimally $k$-(edge-)connected graph can differ from $k$. Trivially the smallest average (edge-)connectivity  among all minimally $k$-(edge-)connected graph is $k$. For the remainder of the paper we thus focus on an upper bound for the average connectivity for all minimally $k$-(edge-)connected graphs. We say that $G$ is an \emph{optimal} minimally $k$-connected graph if $G$ has maximum average connectivity among all minimally $k$-connected graphs. Since minimally 1-(edge-)connected graphs are precisely the trees, they have average connectivity $1$. However, for $k \ge 2$, the average (edge-)connectivity of a minimally $k$-(edge-)connected graph need not be $k$. The structure of optimal minimally $2$-(edge-)connected graphs, and an upper bound on their average (edge-) connectivity is determined by Casablanca, Mol, and Oellermann \cite{CasablancaMolOellermann2018}. In order to state these results we say that a minimally $k$-(edge-)connected graph is {\em degree-partitioned} if it is bipartite, with partite sets the set of vertices of degree $k$ and the set of vertices of degree exceeding $k$.   (Note that every degree-partitioned minimally $k$-(edge-)connected graph has order at least $2k+1$.)

\begin{theorem}[Casablanca, Mol, and Oellermann~\cite{CasablancaMolOellermann2018}]\  \label{min_2_conn}
\begin{enumerate}
\item If $G$ is an optimal minimally $2$-connected graph of order $n\geq 5$, then $G$ is degree-partitioned.  Moreover, we have $\overline{\kappa}(G)<\frac{9}{4}$, and this bound is asymptotically sharp.
\item If $G$ is an optimal minimally $2$-edge-connected graph of order $n\geq 5$, then $G$ is degree-partitioned.  Moreover, we have $\overline{\lambda}(G)<\frac{9}{4}$, and this bound is asymptotically sharp.
\end{enumerate}
\end{theorem}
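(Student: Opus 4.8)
The plan is to combine a structural reduction with a degree-sequence optimization, and then to exhibit a matching construction. The starting observation is that $\kappa(u,v)\le\min(\deg u,\deg v)$ for every pair, so a pair can contribute more than $2$ to $\sum_{\{u,v\}}\kappa(u,v)$ only when both endpoints lie in the set $B$ of vertices of degree at least $3$. Writing $b=|B|$ and $s=n-b$, this yields
\[
\overline{\kappa}(G)\le 2+\frac{1}{\binom{n}{2}}\sum_{\{x,y\}\subseteq B}\bigl(\kappa(x,y)-2\bigr),
\]
so everything reduces to controlling the \emph{excess} connectivity among the high-degree vertices. I would first prove the structural claim that an optimal graph is degree-partitioned, and then bound this excess under the degree-partitioned hypothesis; the bound for \emph{all} minimally $2$-connected graphs then follows at once, since the maximizer is degree-partitioned and every other graph of the same order has no larger average connectivity.

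For the structural part I would invoke the Dirac--Plummer theory of minimally $2$-connected graphs: such a graph has no cycle with a chord, hence every cycle contains at least two vertices of degree $2$, and consequently $B$ induces a forest. Assuming $G$ is optimal but not degree-partitioned, it must then contain an edge inside $B$ or an edge inside $S$ (the set of degree-$2$ vertices). In each case I would apply a local rewiring---subdividing an edge joining two high-degree vertices while compensating by suppressing a suitable degree-$2$ vertex elsewhere, or redirecting an edge incident with a degree-$2$ vertex---chosen so as to keep the order and the property of being minimally $2$-connected intact while strictly increasing $\overline{\kappa}$, contradicting optimality. \textbf{This is the step I expect to be the main obstacle:} the operations must be designed so that they never create a chorded cycle nor drop a local connectivity below $2$, and one must verify that the net change to $\sum_{\{u,v\}}\kappa(u,v)$ is strictly positive, which requires a careful accounting of how the connectivities of the affected pairs change.

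Granting the reduction, suppose $G$ is degree-partitioned. Then $B$ is independent and every degree-$2$ vertex is adjacent to exactly two vertices of $B$, so $G$ is the subdivision (one new vertex per edge) of a $2$-connected loopless multigraph on $B$ with minimum degree at least $3$. In particular every edge runs between $B$ and $S$, whence $\sum_{x\in B}\deg x=2s=2(n-b)$; combined with $\deg x\ge 3$ this forces $b\le 2n/5$. Using $\kappa(x,y)-2\le\min(\deg x-2,\deg y-2)\le\tfrac12\bigl((\deg x-2)+(\deg y-2)\bigr)$ and summing over the big pairs gives
\[
\sum_{\{x,y\}\subseteq B}\bigl(\kappa(x,y)-2\bigr)\le\frac{b-1}{2}\sum_{x\in B}(\deg x-2)=\frac{b-1}{2}(2n-4b)=(b-1)(n-2b).
\]
The right-hand side is a downward parabola in $b$ maximized near $b=n/4$, which lies in the admissible range $b\le 2n/5$, and there its value is at most $(n-2)^2/8$; substituting yields $\overline{\kappa}(G)\le 2+\tfrac{(n-2)^2}{4n(n-1)}<\tfrac94$, with the fraction tending to $\tfrac14$ as $n\to\infty$.

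Finally, for asymptotic sharpness I would subdivide every edge of a $6$-regular, $6$-connected graph $H_b$ on $b$ vertices---for instance the circulant $C_b(1,2,3)$---to obtain a minimally $2$-connected graph $G_b$ of order $n=4b$. Here every pair of vertices of $B$ has connectivity exactly $6$ and every other pair has connectivity exactly $2$, so $\overline{\kappa}(G_b)=2+\frac{4\binom{b}{2}}{\binom{4b}{2}}=2+\frac{b-1}{4b-1}\to\frac94$. The edge-connectivity statements would be handled by the identical scheme, replacing $\kappa$ by $\lambda$ and invoking the analogous structure theory of minimally $2$-edge-connected graphs; the bound $\lambda(u,v)\le\min(\deg u,\deg v)$, the same degree-sequence optimization, and the same subdivision family (for which $\lambda(x,y)=\kappa(x,y)$ on every pair) give $\overline{\lambda}(G)<\tfrac94$ together with its asymptotic sharpness.
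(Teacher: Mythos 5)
Your proposal has the right architecture, but it does not prove the theorem, because its structural core is missing. (Note that within this paper the statement is imported from \cite{CasablancaMolOellermann2018} without proof; the two pieces you do carry out correspond to the paper's Theorem~\ref{KappaBarBound} with $k=2$ and to the constructions of Section~\ref{max_ave}, and they are correct. Your ``$\min\leq$ average'' estimate $\sum_{\{x,y\}\subseteq B}(\kappa(x,y)-2)\leq (b-1)(n-2b)\leq (n-2)^2/8$ is a genuinely simpler route to the same numerical bound than the potential-function/balanced-sequence argument via Lemma~\ref{Balanced}, and your sharpness family, the subdivision of the $6$-connected circulant $C_b(1,2,3)$, is a clean and valid $k=2$ analogue of the graphs $\Gamma_{k,p}$ and $\Psi_{k,p}$: between branch vertices the six internally disjoint paths survive subdivision and the degree caps them at six, every other pair has local connectivity two, and $\overline{\kappa}(G_b)=2+\tfrac{b-1}{4b-1}\to\tfrac94$.)

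The genuine gap is exactly the step you flag yourself: the claim that every \emph{optimal} minimally $2$-(edge-)connected graph of order $n\geq 5$ is degree-partitioned. Everything else in your argument is conditional on it --- in particular, your deduction that $\overline{\kappa}(G)<\tfrac94$ for \emph{all} minimally $2$-connected graphs uses optimality of a degree-partitioned maximizer, so without the structure theorem you have proved neither assertion of part (a), and likewise for part (b). Your rewiring sketch is not close to a proof as stated. Concretely: subdividing a $B$--$B$ edge while ``suppressing a suitable degree-$2$ vertex elsewhere'' can create a multi-edge or a chorded cycle (if the suppressed vertex's neighbours are already adjacent or joined by another subdivided edge), can destroy minimal $2$-connectivity, and, most seriously, the net effect on $\sum_{\{u,v\}}\kappa(u,v)$ is not sign-definite --- suppressing a degree-$2$ vertex deletes a vertex whose pairs contributed positively and can lower $\kappa(x,y)$ for many pairs $x,y\in B$, so strict increase must be argued pair-by-pair against a carefully chosen vertex, not asserted. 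The same difficulty recurs for edges inside $S$, where adjacent degree-$2$ vertices (long subdivided threads) are entirely typical in minimally $2$-connected graphs, and again in the edge-connectivity version, where the underlying structure theory (no chordless-cycle property) is different and you invoke it only by analogy. This case analysis is precisely where the bulk of the work in \cite{CasablancaMolOellermann2018} lies, and it cannot be waved through: the theorem's first clause \emph{is} that structure theorem.
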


In this paper, we continue the study of the average (edge-)connectivity of minimally $k$-(edge-)connected graphs, which was initiated by Casablanca, Mol, and Oellermann~\cite{CasablancaMolOellermann2018}.  Mader \cite{Mader1972} showed that the vertices of degree exceeding $k$ in a minimally $k$-connected graph induce a forest. Based on Theorem \ref{min_2_conn}, and some computational evidence, we believe that something similar can be said about the structure of optimal minimally $k$-(edge-)connected graphs for $k\ge 3$. 

\begin{conjecture}\label{DegreePartitionedConjecture}
Let $k\geq 3$.  If $G$ is an optimal minimally $k$-(edge-)connected graph of order $n\geq 2k+1$, then $G$ is degree-partitioned.
\end{conjecture}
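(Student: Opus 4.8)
Since the final statement is posed as a conjecture (the new content being the range $k\ge 3$, as the case $k=2$ is Theorem~\ref{min_2_conn}), I can only propose a line of attack and indicate where I expect it to stall. The natural route is a local exchange argument by contradiction. Suppose $G$ is an optimal minimally $k$-connected graph of order $n\ge 2k+1$ that is not degree-partitioned, and write $V_k$ and $V_{>k}$ for the sets of vertices of degree $k$ and of degree exceeding $k$, respectively. By Mader's theorem the subgraph induced by $V_{>k}$ is a forest, but it need not be edgeless; thus $G$ fails to be degree-partitioned exactly when some edge joins two vertices of the same intended part, that is, there is either (i) an edge $xy$ with $x,y\in V_{>k}$, or (ii) an edge $uv$ with $u,v\in V_k$. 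In each case the plan is to construct a minimally $k$-connected graph $G'$ of order $n$ with $\overline{\kappa}(G')>\overline{\kappa}(G)$, contradicting optimality.

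The guiding heuristic, already visible in the proof of the $k=2$ case, is that $\overline{\kappa}$ is increased by concentrating surplus degree on an independent set $V_{>k}$ while keeping every vertex of $V_k$ adjacent only to $V_{>k}$: any pair meeting $V_k$ contributes at most $k$ to $\sum\kappa(u,v)$, and one wants as many such pairs as possible to attain this value. For case (i) I would reroute the critical edge $xy$ so as to replace a $V_{>k}$--$V_{>k}$ adjacency by a $V_k$--$V_{>k}$ adjacency, pushing the high-degree part toward an independent set without decreasing $|V_k|$. For case (ii) the edge $uv$ is \emph{wasted}, in that it raises the degrees of two vertices that can carry connectivity at most $k$; here I would reroute $uv$ through a vertex of $V_{>k}$, converting a within-$V_k$ adjacency into the desired bipartite form. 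Note that one cannot simply delete such an edge: minimality forces every edge of $G$ to be critical, so any deletion must be compensated by a carefully chosen addition.

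Two ingredients are needed to make either move rigorous. First, one must certify that $G'$ is again minimally $k$-connected, both that $\kappa(G')=k$ and that every edge of $G'$ remains critical; Mader's reduction theory (the guaranteed supply of degree-$k$ vertices together with the forest structure of $V_{>k}$) is the natural source of admissible moves preserving these rigid global conditions. Second, and this is the crux, one must show that the net change $\sum_{\{u,v\}}\big(\kappa_{G'}(u,v)-\kappa_G(u,v)\big)$ is strictly positive. I expect this accounting to be the main obstacle: each $\kappa(u,v)$ is a global quantity governed by minimum vertex cuts via Menger's theorem, so a single rerouted edge can raise the connectivity of many pairs while lowering that of others, and the clean but lossy estimate $\overline{\kappa}(G)\le 2|E(G)|/n$ (average connectivity at most average degree, which follows from $\kappa(u,v)\le\min\{\deg u,\deg v\}$) controls only the aggregate and cannot detect a local improvement. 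The edge-connectivity version should follow the same scheme with edge-disjoint paths and minimum edge-cuts in place of their vertex analogues, and is likely more tractable, since splitting-off and related edge operations interact more predictably with $\lambda_G(u,v)$; it is precisely the delicate control of $\sum\kappa(u,v)$ under local surgery that keeps the vertex statement a conjecture.
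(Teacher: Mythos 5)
The statement you were asked to prove is precisely the open conjecture of the paper: the authors do not prove it, and the entire point of their Section on constructions and bounds is to show what would \emph{follow} if it were true (namely $\overline{\kappa}(G)<\frac{9}{8}k$) and that this consequence would be asymptotically sharp. The only supporting evidence the paper offers is the $k=2$ case (Theorem~\ref{min_2_conn}, from \cite{CasablancaMolOellermann2018}), Mader's theorem \cite{Mader1972} that the vertices of degree exceeding $k$ induce a forest, and unspecified computational evidence. So there is no proof in the paper to compare yours against, and your submission --- candidly framed as a plan rather than a proof --- correctly reflects the actual status of the statement.

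As a plan, what you describe is the natural exchange strategy, and you have correctly isolated the two points where it is not merely incomplete but genuinely hard. First, rerouting an edge while preserving minimal $k$-connectivity is a rigid global constraint: after surgery, \emph{every} edge of $G'$ must lie in some minimum vertex cut (equivalently $\kappa(G'-e)<k$ for all $e$), and Mader's forest structure gives you no ready-made admissible move; moreover, rerouting an edge incident to degree-$k$ vertices changes the partition $(V_k,V_{>k})$ itself, so the target bipartition is a moving target. Second, the sign of $\sum_{\{u,v\}}\bigl(\kappa_{G'}(u,v)-\kappa_G(u,v)\bigr)$ is controlled by minimum cuts via Menger's theorem, which a single local modification can shift for $\Theta(n^2)$ pairs in both directions; the potential bound $\kappa(u,v)\le\min\{\deg u,\deg v\}$ used in Theorem~\ref{KappaBarBound} is, as you say, an aggregate estimate that cannot certify a strict local improvement. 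One caution on your closing remark: the paper leaves the edge version equally open (Conjecture~\ref{DegreePartitionedConjecture} covers both), so your expectation that splitting-off techniques make the $\lambda$ case tractable is itself an unproven claim, not a reduction you may lean on. In short: no gap in self-awareness, but also no proof --- which is the correct outcome here.
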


In Section \ref{preliminaries}, we show that if $k \ge 2$ and $G$ is a degree-partitioned minimally $k$-connected graph of order $n$, then the average connectivity of $G$ satisfies
\begin{align}\label{vertex_version}
\overline{\kappa}(G)\leq k+\frac{k(n-2)^2}{8n(n-1)}<\frac{9}{8}k.
\end{align}
By a similar argument, it follows that if $k\ge 2$ and $G$ is a degree-partitioned minimally $k$-edge-connected graph of order $n$, then the average edge-connectivity of $G$ satisfies 
\begin{align} \label{edge_version_bound}
\overline{\lambda}(G)\leq k+\frac{k(n-2)^2}{8n(n-1)}<\frac{9}{8}k.
\end{align}
We note that, if Conjecture~\ref{DegreePartitionedConjecture}
 holds, then every minimally $k$-connected graph $G$ satisfies $\overline{\kappa}(G)<\frac{9}{8}k$, and every minimally $k$-edge-connected graph $G$ satisfies $\overline{\lambda}(G)<\frac{9}{8}k$. The inequalities given in (\ref{vertex_version}) and (\ref{edge_version_bound}) were established in \cite{CasablancaMolOellermann2018} for the case $k=2$ and it was remarked that these proofs could be extended to all $k \ge 3$.

In Section \ref{max_lambda_bar} we describe, for every $k \ge 3$, an infinite family of degree-partitioned minimally $k$-edge-connected graphs whose average edge-connectivity is asymptotically $\frac{9}{8}k$.  In Section \ref{max_kappa_bar} we describe, for every $k\ge 3$, an infinite family of degree-partitioned minimally $k$-connected graphs whose average connectivity is asymptotically $\frac{9}{8}k$.  Thus, the upper bounds given by (\ref{vertex_version}) and (\ref{edge_version_bound}) are asymptotically sharp.

\section{Upper Bounds}\label{preliminaries}

In order to establish the upper bounds given by (\ref{vertex_version}) and (\ref{edge_version_bound}), we generalize the argument given by Casablanca, Mol, and Oellermann~\cite[Section~2.2]{CasablancaMolOellermann2018} for $k=2$.  We first recall some terminology (c.f.~\cite{CasablancaMolOellermann2018}).

Let $G$ be a graph of order $n$.  The \emph{total connectivity} of $G$, denoted by $K(G)$, is the sum of the connectivities over all pairs of distinct vertices of $G$, i.e., we have $K(G)=\binom{n}{2}\overline{\kappa}(G)$.  The \emph{potential} of a sequence of positive integers $d_1,d_2,\dots,d_n$ is defined by
\[
P(d_1,d_2,\dots,d_n)=\sum_{1\leq i<j\leq n} \min\{d_i,d_j\}.
\]
If $G$ has vertices $v_1,v_2,\dots,v_n$, then the \emph{potential} of $G$, denoted by $P(G)$, is the potential of the degree sequence of $G$; that is,
\begin{align*}
P(G)&=P(\deg(v_1),\deg(v_2),\dots,\deg(v_n))=\sum_{1\leq i<j\leq n}\min\{\deg(v_i),\deg(v_j)\}.
\end{align*}
Since $\kappa(u,v)\leq \min\{\deg(u),\deg(v)\}$ for all pairs of distinct vertices $u,v$ of $G$, we have $K(G)\leq P(G)$.

We require the following lemma, which describes the maximum potential among all sequences of $n$ positive integers whose sum is a fixed number $D$.

\begin{lemma}[Beineke, Oellermann, and Pippert~\cite{BeinekeOellermannPippert2002}]\label{Balanced}
Let $d_1,d_2,\dots,d_n$ be the degree sequence of a graph, and let $D=\sum_{i=1}^n d_i$.  Let $D=dn+r,$ where $d\geq 0$ and $0\leq r<n.$  Then
\[
P(d_1,d_2,\dots,d_n)\leq P(\underbrace{d,\dots,d}_{\text{$n-r$ terms}},\underbrace{d+1,\dots,d+1}_{\text{$r$ terms}}).  \qedhere
\]
\end{lemma}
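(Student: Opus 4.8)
The plan is to prove the inequality by a smoothing (exchange) argument: starting from an arbitrary sequence of positive integers summing to $D$, I would repeatedly apply a local move that preserves the sum while never decreasing the potential, and then show that this process terminates at the balanced sequence on the right-hand side. Since the potential $P$ is invariant under permutations of its arguments, it suffices to track the underlying multiset of values.

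The move is as follows. Suppose the sequence is not already balanced, so there exist indices $a,b$ with $d_a\leq d_b-2$. Replace $d_a$ by $d_a+1$ and $d_b$ by $d_b-1$, leaving all other entries unchanged. This preserves $D=\sum_i d_i$, and since $d_b\geq d_a+2\geq 3$ the entries remain positive integers. I would then show the potential does not decrease. Writing $\Delta P$ for the change, I would isolate the contribution of the pair $\{a,b\}$ and of the pairs $\{a,c\},\{b,c\}$ for each $c\notin\{a,b\}$. The pair $\{a,b\}$ contributes $\min\{d_a+1,d_b-1\}-\min\{d_a,d_b\}=(d_a+1)-d_a=1$, using $d_a+1\leq d_b-1$. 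For a third entry $d_c$, the combined change in $\min\{d_a,d_c\}+\min\{d_b,d_c\}$ is $0$ when $d_c\leq d_a$, is $+1$ when $d_a+1\leq d_c\leq d_b-1$, and is $0$ when $d_c\geq d_b$ (the $+1$ gain from the $a$-pair cancelling the $-1$ loss from the $b$-pair). Hence every third entry contributes a nonnegative amount, and $\Delta P\geq 1>0$.

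For termination I would track the monovariant $\sum_i d_i^2$: the move changes it by $(d_a+1)^2+(d_b-1)^2-d_a^2-d_b^2=2(d_a-d_b+1)\leq -2$, so it strictly decreases at each step. As a nonnegative integer it can decrease only finitely often, so the process halts. At termination no two entries differ by $2$ or more, so all entries lie in $\{d',d'+1\}$ for some integer $d'$; together with $\sum_i d_i=D$ and uniqueness of quotient and remainder modulo $n$, this forces exactly $r$ entries equal to $d+1$ and $n-r$ equal to $d$. By permutation invariance the terminal sequence has potential $P(\underbrace{d,\dots,d}_{n-r},\underbrace{d+1,\dots,d+1}_{r})$, and since the potential never decreased along the way, this value is at least $P(d_1,\dots,d_n)$, which is the claim.

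I expect the only delicate step to be the verification that $\Delta P\geq 0$; the three-way case split on $d_c$ relative to $d_a$ and $d_b$ is where all the work lives, and the observation that makes it clean is that increasing $d_a$ by one raises $\min\{d_a,d_c\}$ exactly when $d_c\geq d_a+1$, while decreasing $d_b$ by one lowers $\min\{d_b,d_c\}$ exactly when $d_c\geq d_b$. Everything else---sum preservation, positivity of the entries, termination, and the reduction to the balanced multiset---is routine bookkeeping.
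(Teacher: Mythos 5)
Your proof is correct, but there is nothing in the paper to compare it against: the paper imports this lemma verbatim from Beineke, Oellermann, and Pippert~\cite{BeinekeOellermannPippert2002} and gives no proof of it, so your argument is a genuine addition rather than a variant of an internal one. The smoothing argument checks out in every detail: the exchange $d_a\mapsto d_a+1$, $d_b\mapsto d_b-1$ (valid whenever $d_a\leq d_b-2$) preserves the sum and positivity; the pair $\{a,b\}$ gains exactly $1$ since $d_a+1\leq d_b-1$; your three-way case split on a third entry $d_c$ is exhaustive and each case gives a change of $0$ or $+1$; the monovariant $\sum_i d_i^2$ drops by at least $2$ per move, so the process terminates; and at termination all entries lie in $\{d',d'+1\}$, whereupon the division algorithm forces exactly $n-r$ entries equal to $d$ and $r$ equal to $d+1$. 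Two remarks are worth making. First, your argument never uses the hypothesis that $d_1,\dots,d_n$ is the degree sequence of a graph --- it establishes the inequality for an arbitrary sequence of positive integers, which is slightly more general than the stated lemma; this also disposes of any worry that the intermediate sequences produced by the exchange might fail to be graphic, since the potential $P$ is defined for any sequence of positive integers. Second, your moves strictly increase $P$, so as a by-product you get that the balanced sequence is the \emph{unique} maximizer among multisets with the given sum, which is more than the lemma asks for.
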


We are now ready to prove the upper bound given by (\ref{vertex_version}).  Recall that a minimally $k$-connected graph is called degree-partitioned if it is bipartite, with partite sets the set of vertices of degree $k$ and the set of vertices of degree exceeding $k$.

\begin{theorem}\label{KappaBarBound}
Let $k\geq 2$, and let $G$ be a degree-partitioned minimally $k$-connected graph of order $n \ge 2k+1$.  Then
\[
\overline{\kappa}(G)\leq k+\frac{k(n-2)^2}{8n(n-1)} < \frac{9}{8}k.
\]
\end{theorem}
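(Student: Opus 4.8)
The plan is to bound the total connectivity $K(G)=\binom{n}{2}\,\overline{\kappa}(G)$ directly, exploiting the degree-partitioned structure. Let $L$ be the partite set consisting of the vertices of degree $k$ and $H$ the partite set consisting of the vertices of degree exceeding $k$, and put $a=|L|$ and $b=|H|$, so that $a+b=n$. Since $G$ is bipartite with partite sets $L$ and $H$, every edge joins $L$ to $H$; in particular $H$ is an independent set, and counting the edges from the $L$-side gives $\sum_{v\in H}\deg(v)=ak$. The other ingredient I would record first is the pointwise behaviour of local connectivities: since $\kappa(G)=k$ we have $\kappa(u,v)\ge k$ for every pair $\{u,v\}$ (by Whitney's formula), while always $\kappa(u,v)\le\min\{\deg(u),\deg(v)\}$. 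For any pair meeting $L$ the upper bound is exactly $k$, so such pairs contribute precisely $k$, and only pairs lying inside $H$ can contribute more than $k$.

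Splitting the defining sum for $K(G)$ according to pair type then gives
\[
K(G)\ \le\ k\Bigl[\tbinom{a}{2}+ab\Bigr]+\sum_{\{u,v\}\subseteq H}\min\{\deg(u),\deg(v)\},
\]
where the final sum is the potential of the degree sequence of the $b$ vertices of $H$. These $b$ degrees are positive integers summing to $ak$, so Lemma~\ref{Balanced} bounds their potential by that of the balanced sequence; a short computation shows that the balanced potential is at most $\tfrac{(b-1)}{2}\cdot ak$ (equivalently, $\sum_{i<j}\min\{d_i,d_j\}\le\tfrac{b-1}{2}\sum_i d_i$ for any nonnegative reals, which follows by sorting the $d_i$ and applying Chebyshev's sum inequality). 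Substituting this and factoring out $ka$ collapses the estimate to
\[
K(G)\ \le\ \frac{k\,(n-b)(n-2+2b)}{2}.
\]

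It then remains to optimize over $b$ and simplify. The quantity $(n-b)(n-2+2b)$ is a downward parabola in $b$ attaining its maximum at $b=\tfrac{n+2}{4}$, where its value is $\tfrac{(3n-2)^2}{8}$; hence $K(G)\le\tfrac{k(3n-2)^2}{16}$. Dividing by $\binom{n}{2}=\tfrac{n(n-1)}{2}$ and using the identity $(3n-2)^2=8n(n-1)+(n-2)^2$ yields exactly $\overline{\kappa}(G)\le k+\tfrac{k(n-2)^2}{8n(n-1)}$, and the concluding strict inequality follows from $(n-2)^2<n(n-1)$, which holds for all $n\ge 2$.

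I expect the conceptual crux to be the first paragraph: recognizing that the degree-partitioned hypothesis forces $H$ to be independent and pins the degree sum of $H$ to $ak$, thereby confining all ``excess'' connectivity (beyond the baseline $k$) to pairs within $H$. Once this is in place, the only real computation is the potential bound together with the optimization over $b$, both of which are rendered painless by the fortunate identity $(3n-2)^2=8n(n-1)+(n-2)^2$. A minor point to handle with care is the application of Lemma~\ref{Balanced} to the degrees of $H$ viewed as a standalone sequence; this is why I would prefer to back it up with the elementary inequality $\sum_{i<j}\min\{d_i,d_j\}\le\tfrac{b-1}{2}\sum_i d_i$, which requires no realizability assumption on the sequence.
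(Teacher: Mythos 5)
Your proposal is correct and follows essentially the same route as the paper: split $K(G)$ into pairs meeting the degree-$k$ side (each contributing exactly $k$) and pairs inside the high-degree side, bound the latter by the potential of the high degrees (whose sum is pinned to $ak$ by bipartiteness), and maximize the resulting downward parabola in $b$ at $b=\tfrac{n+2}{4}$, which is precisely the paper's computation in the variable $s$. The one small departure is welcome: your elementary inequality $\sum_{i<j}\min\{d_i,d_j\}\le\tfrac{b-1}{2}\sum_i d_i$ (via Chebyshev) substitutes for the paper's appeal to Lemma~\ref{Balanced}, which is stated only for degree sequences of graphs, whereas the high degrees viewed in isolation need not form one; this makes your version of that step slightly more self-contained.
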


\begin{proof}
Suppose that $G$ has $s$ vertices of degree exceeding $k$, and hence $n-s$ vertices of degree $k$. Let $d_1, d_2, \ldots, d_s$ be the degrees of the vertices of degree exceeding $k$.  Since $G$ is degree-partitioned, the sum $d_1+d_2+\cdots+d_s$ must be equal to $k(n-s)$, the sum of the degrees of the vertices having degree $k$.

Let $k(n-s)=ds+r$ for $d,r\in\mathbb{Z}$ and $0\leq r<s$.  Then by Lemma~\ref{Balanced}, we have
\begin{align*}
K(G)\leq P(G)&\leq k\left[\tbinom{n}{2}-\tbinom{s}{2}\right]+P(d_1,d_2,\dots,d_s)\\
&\leq k\left[\tbinom{n}{2}-\tbinom{s}{2}\right]+P(\underbrace{d,\dots,d}_{\text{$s-r$ terms}},\underbrace{d+1,\dots,d+1}_{\text{$r$ terms}})\\
&\leq k\tbinom{n}{2}-k\tbinom{s}{2}+d\tbinom{s}{2}+\tbinom{r}{2}\\
&=k\tbinom{n}{2}+(d-k)\tbinom{s}{2}+\tbinom{r}{2}\\
&=k\tbinom{n}{2}+\left[\tfrac{k(n-s)-r}{s}-k\right]\tbinom{s}{2}+\tbinom{r}{2}\\
&=k\tbinom{n}{2}+\left[\tfrac{kn-2ks-r}{s}\right]\tfrac{s(s-1)}{2}+\tfrac{r(r-1)}{2}\\
&=k\tbinom{n}{2}+\tfrac{(kn-2ks)(s-1)}{2}-\tfrac{r(s-1)}{2}+\tfrac{r(r-1)}{2}\\
&=k\tbinom{n}{2}+\tfrac{k(n-2s)(s-1)}{2}-\tfrac{r(s-r)}{2}\\
&\leq k\tbinom{n}{2}+\tfrac{k}{2}(n-2s)(s-1)
\end{align*}
Using elementary calculus, we find that the quantity $(n-2s)(s-1)$ achieves a maximum of $\tfrac{(n-2)^2}{8}$ at $s=\tfrac{n+2}{4}$. Thus  we have
\begin{align*}
K(G)&\leq k\tbinom{n}{2}+\tfrac{k}{2}\tfrac{(n-2)^2}{8}.
\end{align*}
Now dividing through by $\binom{n}{2}$ gives the desired upper bound on $\overline{\kappa}(G)$.
\end{proof}

The upper bound given by~(\ref{edge_version_bound}) can be established in a strictly analogous manner, so we omit the proof.

\begin{theorem}\label{LambdaBarBound}
Let $k\geq 2$, and let $G$ be a degree-partitioned minimally $k$-edge-connected graph of order $n\geq 2k+1$.  Then
\[
\overline{\lambda}(G)\leq k+\frac{k(n-2)^2}{8n(n-1)} < \frac{9}{8}k. 
\]

\end{theorem}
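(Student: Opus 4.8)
The plan is to imitate the proof of Theorem~\ref{KappaBarBound} almost verbatim, replacing the connectivity $\kappa$ with the edge-connectivity $\lambda$ throughout. The engine of that proof is the pair of facts that (i) $\kappa_G(u,v)\le\min\{\deg(u),\deg(v)\}$ for every pair of vertices, so that the total connectivity is bounded above by the potential $P(G)$, and (ii) in a degree-partitioned graph the degrees of the high-degree vertices sum to exactly $k(n-s)$. Both facts have exact edge-analogues, so essentially none of the computation changes.

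First I would observe that $\lambda_G(u,v)\le\min\{\deg(u),\deg(v)\}$, since a family of edge-disjoint $u$--$v$ paths uses distinct edges at $u$ and distinct edges at $v$, whence its size is at most $\deg(u)$ and at most $\deg(v)$. Summing this over all pairs of distinct vertices gives $\binom{n}{2}\overline{\lambda}(G)\le P(G)$, which is exactly the role played by the inequality $K(G)\le P(G)$ in the vertex case.

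Next I would record the structural identity that drives Lemma~\ref{Balanced}. Writing $s$ for the number of vertices of degree exceeding $k$ and $d_1,\dots,d_s$ for their degrees, the degree-partitioned hypothesis says $G$ is bipartite with the set of degree-$k$ vertices and the set of high-degree vertices as its partite sets. Since every edge joins the two parts, the degree sum on each side is the same, and hence $d_1+\cdots+d_s=k(n-s)$. Crucially, this identity follows purely from the bipartite structure and does not depend on whether one is counting internally disjoint or edge-disjoint paths, so it transfers unchanged.

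With these two ingredients in place, the remainder of the argument is a transcription of the proof of Theorem~\ref{KappaBarBound}: split the potential into the $\binom{n}{2}-\binom{s}{2}$ pairs meeting a degree-$k$ vertex (each contributing $k$, since the other endpoint has degree at least $k$) and the $\binom{s}{2}$ pairs of high-degree vertices (bounded via Lemma~\ref{Balanced} applied with $D=k(n-s)$), then run the identical algebraic simplification and the same single-variable optimization of $(n-2s)(s-1)$, which attains its maximum $\tfrac{(n-2)^2}{8}$ at $s=\tfrac{n+2}{4}$. Dividing by $\binom{n}{2}$ yields the stated bound. The only point where the analogy could conceivably fail is the transfer of the two displayed ingredients above; once those are confirmed, there is no remaining obstacle, which is precisely why the authors are entitled to omit the proof.
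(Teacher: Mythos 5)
Your proposal is correct and is precisely the argument the paper intends: the paper omits the proof of Theorem~\ref{LambdaBarBound}, stating it is ``strictly analogous'' to that of Theorem~\ref{KappaBarBound}, and you have correctly identified the only two ingredients that need to transfer --- the bound $\lambda_G(u,v)\le\min\{\deg(u),\deg(v)\}$ and the bipartite degree-sum identity $d_1+\cdots+d_s=k(n-s)$ --- after which the potential-splitting, Lemma~\ref{Balanced}, and the optimization of $(n-2s)(s-1)$ go through verbatim.
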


\section{Constructions} \label{max_ave}

In this section, we provide constructions of degree-partitioned minimally $k$-connected graphs and degree-partitioned minimally $k$-edge-connected graphs for which the upper bounds of Theorem~\ref{KappaBarBound} and Theorem~\ref{LambdaBarBound}, respectively, are attained asymptotically.  This has already been done for the case $k=2$~\cite{CasablancaMolOellermann2018}, so we consider only $k\geq 3$.  We begin by defining a $k$-regular graph $G_{k,p}$, which is used as a ``building block'' in the constructions that follow.

\begin{definition} \label{G_{k,p}}
Let $k, p$ be integers such that $3 \le k \le p$. Let $W=\{w_0,w_1, \ldots, w_{p-1}\}$ and $X=\{x_0, x_1, \ldots, x_{p-1}\}$. Let $G_{k,p}$ be the graph with vertex set $W\cup X$ and edge set 

\[ E=\{w_ix_{i+j}\ |\ 0 \le i \le p-1, 0 \le j \le k-1\},\]
where subscripts are expressed modulo $p$.
\end{definition}

For example, the graph $G_{3,20}$ is illustrated in Figure~\ref{Fig:G3}.  In the sequel, the notation $G_{k,p}$ will always denote the graph of Definition \ref{G_{k,p}}.  We will require several intermediate results about $G_{k,p}$.  We begin with a lemma about the minimal vertex separators in $G_{k,p}$.  

For a vertex $u$ in a graph $G$, we let $N(u)$ denote the \emph{(open) neighbourhood} of $u$, i.e., the set of vertices adjacent to $u$ in $G$.  For a subset $U\subseteq V(G)$, we let $N(U)$ denote the \emph{(open) neighbourhood} of $U$, i.e., the set of vertices not contained in $U$ that are adjacent to a vertex in $U$.

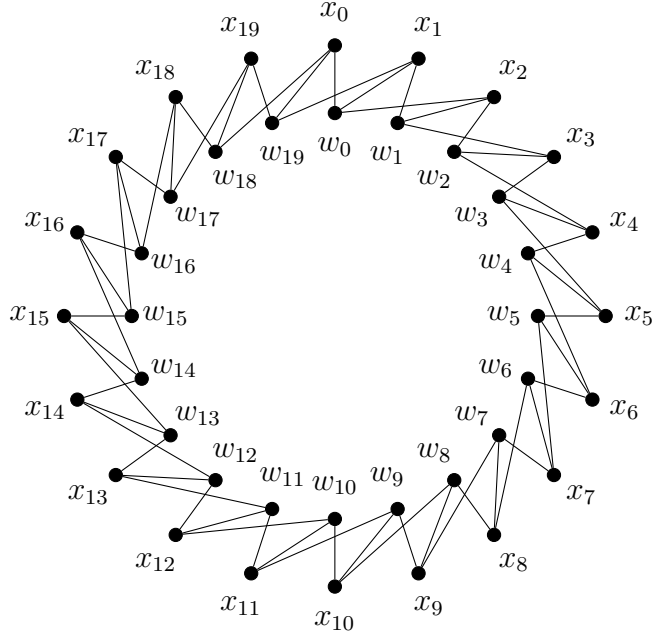
\begin{figure}
\begin{center}
\begin{tikzpicture}[scale=0.9]
\pgfmathtruncatemacro{\n}{20}
\pgfmathtruncatemacro{\m}{\n-1}
\foreach \x in {0,1,...,\m}
{
\vertex (\x) at (-\x/\n*360+90:3) {};
\node at (-\x/\n*360+90:2.5) {$w_{\x}$};
\vertex (a\x) at (-\x/\n*360+90:4) {};
\node at (-\x/\n*360+90:4.5) {$x_{\x}$};
}
\foreach \x in {0,1,2,...,\m}
{
\pgfmathtruncatemacro{\y}{Mod(\x-1,\n)}
\pgfmathtruncatemacro{\z}{Mod(\x-2,\n)}
\path
(\x) edge (a\x)
(\y) edge (a\x)
(\z) edge (a\x);
}
\end{tikzpicture}
\end{center}
\caption{The graph $G_{3,20}$.}
\label{Fig:G3}
\end{figure}

\begin{lemma}\label{non_triv_components}
Let $k,p$ be integers such that $3 \le k \le p$, and let $u$ and $v$ be nonadjacent vertices of $G_{k,p}$.  Let $S$ be a minimal vertex separator of $u$ and $v$ in $G_{k,p}$.  Then $|S|=k$ or $|S|=2k-2$.
\end{lemma}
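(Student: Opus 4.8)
The plan is to exploit the cyclic interval structure of $G_{k,p}$: since $w_i$ is adjacent to $x_j$ exactly when $j-i\in\{0,1,\dots,k-1\}$ modulo $p$, each $w_i$ reaches the $k$ consecutive vertices $x_i,\dots,x_{i+k-1}$ ``forward'' and each $x_j$ reaches $w_{j-k+1},\dots,w_j$ ``backward''. I would begin from the standard property of minimal separators: if $C_u$ and $C_v$ are the components of $G_{k,p}-S$ containing $u$ and $v$, then $N(C_u)=N(C_v)=S$, and in particular every vertex of $S$ has a neighbour in both $C_u$ and $C_v$. It therefore suffices to compute $|N(C)|$ for a component $C$ of $G_{k,p}-S$ with $N(C)=S$.

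Fix such a component $C$ and let $I_W=\{i:w_i\in C\}$ and $I_X=\{j:x_j\in C\}$ be the positions it occupies. A single-vertex component $\{w_i\}$ (or $\{x_j\}$) has $|N(C)|=k$, so assume $|C|\ge 2$; then, as $G_{k,p}$ is bipartite and $C$ is connected, $C$ contains vertices of both types and each vertex of $C$ has a neighbour of the other type in $C$, giving $I_X\subseteq R:=\bigcup_{i\in I_W}[i,i+k-1]$ and $I_W\subseteq L:=\bigcup_{j\in I_X}[j-k+1,j]$. Hence $N(C)\cap X=R\setminus I_X$ and $N(C)\cap W=L\setminus I_W$, so $|N(C)|=(|R|-|I_W|)+(|L|-|I_X|)$. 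When $C$ lies in a proper arc of the cycle, so that $R$ and $L$ are genuine intervals, this evaluates to $|N(C)|=2k-2+g_W+g_X$, where $g_W$ and $g_X$ count the omitted positions inside the spans of $I_W$ and $I_X$. The target value $2k-2$ is thus the gapless case $g_W=g_X=0$.

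To eliminate the gaps I would use minimality. Suppose $w_m$ is an omitted position strictly inside the span of $I_W$. Connectivity of $C$ forces some $x_j\in C$ with $j\in[m,m+k-1]$ (otherwise no path of $C$ could cross position $m$ within the arc), so $w_m$ is adjacent to $C$ and hence $w_m\in N(C)=S$. But every forward neighbour $x_m,\dots,x_{m+k-1}$ of $w_m$ lies in $R=(N(C)\cap X)\cup I_X\subseteq S\cup C$, so $w_m$ has no neighbour in $C_v$; this contradicts $N(C_v)=S$. The symmetric argument handles a gap in $I_X$, so $g_W=g_X=0$ and $|N(C)|=2k-2$.

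The step I expect to be the main obstacle is the arc-confinement assumed in the second paragraph, together with the degenerate behaviour when $p$ is close to $k$. The key point is that two disjoint connected subgraphs of $G_{k,p}$ cannot both ``encircle'' the position cycle: separation forbids any $w_i\in C_u$ and $x_j\in C_v$ with $j-i\in\{0,\dots,k-1\}$ (they would be adjacent), and symmetrically, so if both $C_u$ and $C_v$ met every block of $k-1$ consecutive positions these avoidance constraints would force an edge between them. I would use this to locate a block of consecutive positions missed by one of the two components and treat that component as living in an arc. The remaining care is to ensure the forward and backward boundaries neither overlap nor wrap around; when the missed block is too short (equivalently, when one component is very large, so $p$ is close to $k$), I would instead analyse the other, position-sparse component directly, and check the few smallest cases by hand to confirm that $|S|$ is always $k$ or $2k-2$.
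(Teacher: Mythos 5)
Your proposal is correct and takes essentially the same route as the paper's proof: split off the singleton-component case (giving $|S|=k$), use minimality in the form $N(C_u)=N(C_v)=S$, show that a non-singleton component occupies gap-free runs of consecutive positions in both $W$ and $X$ (a gap vertex would be a separator vertex with no neighbour in the other component), and count the $2k-2$ boundary vertices. The arc-confinement issue you flag is exactly what the paper handles by relabelling so that $w_0$ lies in the component but $w_{p-1}$ does not, and it resolves more simply than you fear: once $|C_v|\ge 2$ it contains a vertex of each type, and a single $X$-vertex (resp.\ $W$-vertex) of $C_v$ already excludes the $W$-positions (resp.\ $X$-positions) of $C_u$ from $k$ consecutive values, so no separate small-$p$ case analysis is needed.
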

\begin{proof}
First of all, if either $u$ or $v$ is isolated in $G_{k,p}-S$, say $u$, then $S$ contains the entire neighbourhood $N(u)$ of $u$, and by the minimality of $S$, we have $S=N(u)$.  We conclude that $|S|=k$ in this case.

So we may assume that neither $u$ nor $v$ is isolated in $G_{k,p}-S$.  In this case, we show that $|S|=2k-2$.  Let $C$ be the component of $G_{k,p}-S$ that contains $u$, and let $D$ be the union of the remaining components of $G_{k,p}-S$. Colour the vertices of $C$ red, the vertices of $D$ white, and the vertices of $S$ black.  Since $u$ is not isolated in $G_{k,p}-S$, the component $C$ has order at least $2$, and hence both $W$ and $X$ must contain at least one red vertex.  Similarly, since $v$ is not isolated in $G_{k,p}-S$, we see that both $W$ and $X$ must contain at least one white vertex.

By symmetry, we may assume that $w_0$
is red, and that 
$w_{p-1}$ is not red; otherwise, we can relabel the vertices of $G_{k,p}$ so that this happens. Since $S$ is a minimal vertex separator of $u$ and $v$, there are no edges between red and white vertices, and every black vertex must be adjacent with at least one red vertex and at least one white vertex.  We illustrate the relevant portion of the graph $G_{k,p}$ in Figure~\ref{Fig:S}.

Let $t\geq 0$ be the largest integer such that all of the vertices in the set $C_W=\{w_0, \ldots, w_t\}$ are coloured red.  Thus all of the vertices in the set $N(\{w_0, \ldots, w_t\}) = \{x_0, \ldots, x_{t+k-1}\}$ are coloured either red or black.  
Let $x_{\ell}$ be the first red vertex and $x_{r}$ be the last red vertex in the sequence $x_0, \ldots, x_{t+k-1}$. (We use $\ell$ and $r$ for ``left'' and ``right'', respectively.)  Since $C$ is connected and contains $w_0$, some neighbour of $w_0$ must be coloured red, meaning that $\ell\leq k-1$.  Similarly, some neighbour of $w_t$ must be coloured red, meaning that $r\geq t$.  So we have $0\leq \ell\leq k-1$ and $t\leq r\leq t+k-1$.

We show first that $x_j$ is coloured red for every $\ell < j < r$. Suppose otherwise that this is not the case, and let $j$ be the smallest integer such that $\ell<j<r$ and $x_j$ is coloured black.  Note that the black vertex $x_j$ must have a white neighbour.  By the minimality of $j$, the vertices $x_\ell,\ldots, x_{j-1}$ are coloured red, and hence none of the vertices $w_0,\ldots,w_{j-1}$ are coloured white.  Thus we either have $j>t$ and $w_j$ is white, or $j<k-1$ and $x_j$ has a white neighbour in the set $\{w_{p-(k-1-j)},\ldots,w_{p-1}\}$.  In the first case, the white vertex $w_j$ is also adjacent to the red vertex $x_r$, a contradiction.  In the second case, the white neighbour of $x_j$ is also adjacent to the red vertex $x_{\ell}$, a contradiction.  We have shown that $C$ contains the vertices in the set $C_W=\{w_0,\ldots, w_t\}$ and the vertices in the set $C_X=\{x_\ell,\ldots,x_r\}$.  In fact, we will see that $V(C)=C_W\cup C_X$.

We now show that $S$ has at least $2k-2$ vertices, i.e., that at least $2k-2$ vertices are coloured black.  First of all, by the definition of $\ell$ and $r$, and the fact that each of the vertices $x_0,\ldots,x_{t+k-1}$ is either red or black, we see that the vertices in the sets
\[
L_X=\{x_0,\ldots,x_{\ell-1}\} \ \ \text{ and } \ \
R_X=\{x_{r+1},\ldots,x_{t+k-1}\}
\]
are coloured black.  (Note that the set $L_X$ is empty if $\ell=0$, and that the set $R_X$ is empty if $r=t+k-1$.)  

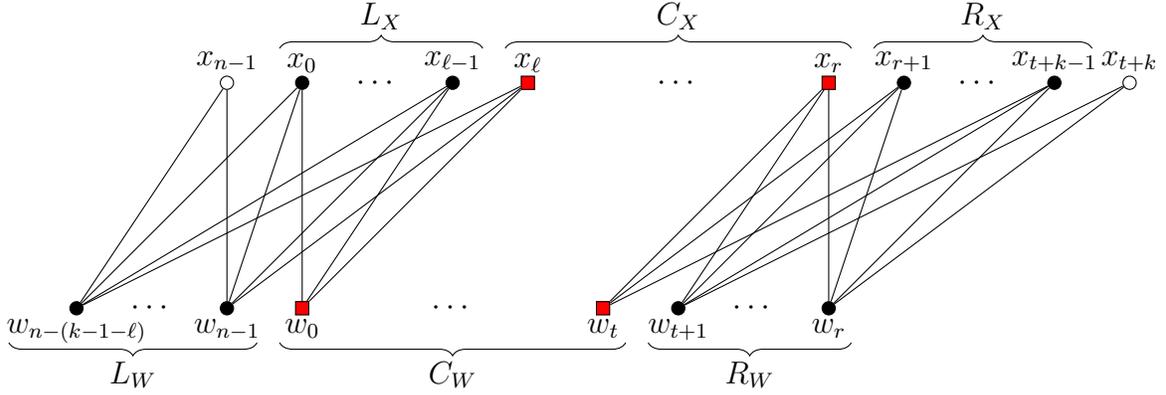
\begin{figure}
\begin{center}
\begin{tikzpicture}
\foreach \x in {0,2,8,10}
{
\vertex (\x) at (\x,0) {};
}
\foreach \x in {3,7}
{
\redvertex (\x) at (\x,0) {};
}
\foreach \x in{1,5,9}
{
\node at (\x,0) {$\cdots$};
}
\foreach \x in {3,5,11,13}
{
\vertex (-\x) at (\x,3) {};
}
\foreach \x in {6,10}
{
\redvertex (-\x) at (\x,3) {};
}
\foreach \x in{4,8,12}
{
\node at (\x,3) {$\cdots$};
}
\bluevertex(-14) at (14,3) {};
\bluevertex(-2) at (2,3) {};
\node[below] at (0) {$w_{n-(k-1-\ell)}$};
\node[below] at (2) {$w_{n-1}$};
\node[below] at (3) {$w_0$};
\node[below] at (7) {$w_t$};
\node[below] at (8) {$w_{t+1}$};
\node[below] at (10) {$w_r$};
\node[above] at (-2) {$x_{n-1}$};
\node[above] at (-3) {$x_{0}$};
\node[above] at (-5) {$x_{\ell-1}$};
\node[above] at (-6) {$x_{\ell}$};
\node[above] at (-10) {$x_{r}$};
\node[above] at (-11) {$x_{r+1}$};
\node[above] at (-13) {$x_{t+k-1}$};
\node[above] at (-14) {$x_{t+k}$};
\foreach \x in {0,2}
{
\foreach \y in {-2,-3,-5,-6}
{
\path
(\x) edge (\y);
}
}
\foreach \y in {-3,-5,-6}
{
\path (3) edge (\y);
}
\foreach \x in {8,10}
{
\foreach \y in {-10,-11,-13,-14}
{
\path
(\x) edge (\y);
}
}
\foreach \y in {-10,-11,-13}
{
\path (7) edge (\y);
}
\draw [decorate,decoration={mirror,brace,amplitude=5pt},xshift=0pt,yshift=-13pt]
(-0.9,0) -- (2.4,0) node [below,black,midway,yshift=-3pt]
{$L_W$};
\draw [decorate,decoration={mirror,brace,amplitude=5pt},xshift=0pt,yshift=-13pt]
(2.7,0) -- (7.3,0) node [below,black,midway,yshift=-3pt]
{$C_W$};
\draw [decorate,decoration={mirror,brace,amplitude=5pt},xshift=0pt,yshift=-13pt]
(7.6,0) -- (10.3,0) node [below,black,midway,yshift=-3pt]
{$R_W$};
\draw [decorate,decoration={brace,amplitude=5pt},xshift=0pt,yshift=13pt]
(2.7,3) -- (5.4,3) node [above,black,midway,yshift=3pt]
{$L_X$};
\draw [decorate,decoration={brace,amplitude=5pt},xshift=0pt,yshift=13pt]
(5.7,3) -- (10.3,3) node [above,black,midway,yshift=3pt]
{$C_X$};
\draw [decorate,decoration={brace,amplitude=5pt},xshift=0pt,yshift=13pt]
(10.6,3) -- (13.5,3) node [above,black,midway,yshift=3pt]
{$R_X$};
\end{tikzpicture}
\end{center}
\caption{The minimal vertex separator $S$ for $u$ and $v$. Note that the black and white vertices are represented by black and white circles, respectively, and the red vertices are represented by red squares.}
\label{Fig:S}
\end{figure}

We claim that the vertices in the sets 
\[
L_W=\{w_{p-(k-1-\ell)},\ldots, w_{p-1}\} \ \ \text{ and } \ \  R_W=\{w_{t+1},\ldots,w_r\}
\]
are also coloured black.  First consider the set $L_W$.  If $\ell=k-1$, then the set $L_W$ is empty, and there is nothing to prove.  So suppose $\ell<k-1$.  Then the vertex $w_{p-1}$ is adjacent to the red vertex $x_\ell$, and since we have assumed that $w_{p-1}$ is not red, it must be black.  Since every black vertex must have a white neighbour, and the neighbours $x_0,\ldots, x_{k-2}$ of $w_{p-1}$ are all black or red, the vertex $x_{p-1}$ must be coloured white.  So all of the vertices in $L_W$ are adjacent to the white vertex $x_{p-1}$ and the red vertex $x_{\ell}$, and must therefore be black.  The argument for $R_W$ is similar.  If $r=t$, then $R_W$ is empty, so suppose that $r>t$.  By the maximality of $t$, the vertex $w_{t+1}$ must be black, and hence must have a white neighbour.  It follows that the vertex $x_{t+k}$ must be white.  So all of the vertices in $R_W$ are adjacent to the white vertex $x_{t+k}$ and the red vertex $x_r$, and must therefore be black.

Let $T=L_W\cup L_X\cup R_W\cup R_X$.  We have shown that $T\subseteq S$.  Since both $W$ and $X$ contain white vertices, the sets $L_X$, $R_X$, $L_W$, and $R_W$ are pairwise disjoint.  Note also that $|L_W\cup L_X|=k-1$ and $|R_W\cup R_X|=k-1$, so $|S|\geq |T|= 2k-2$.  Moreover, since $N(C_W\cup C_X)=T$, we see that no vertex in $C_W\cup C_X$ has a red neighbour outside of $C_W\cup C_X$.  It follows that $V(C)=C_W\cup C_X$, and that $G_{k,p}-T$ is disconnected, hence $S=T$.
\end{proof}

We note that in the terminology of \cite{LiYang2012}, we have shown that $G_{p,k}$ has connectivity $k$ and is \emph{essentially $(2k-2)$-connected}.

The proof of Lemma~\ref{non_triv_components} reveals more about the minimal vertex separators of $G_{k,p}$ than just their cardinality.  We can describe the structure of the minimal vertex separators in $G_{p,k}$ as in the following remark and this is important in the sequel.

\begin{remark}\label{Remark:VertexSeparators}
Let $S$ be a minimal vertex separator of nonadjacent vertices $u$ and $v$ in $G_{k,p}$.  Then one of the following holds:
\begin{itemize}
\item $|S|=k$, and $S=N(u)$ or $S=N(v)$.
\item $|S|=2k-2$, and with notation as in the proof of Lemma~\ref{non_triv_components}, we have $S\cap W=L_W\cup R_W$,  where both $L_W$ and $R_W$ consist of at most $k-1$ consecutive vertices from the cyclic arrangement of vertices of $W$ and $S\cap X=L_X\cup R_X$, where $L_X$ and $R_X$ consist of at most $k-1$ consecutive vertices from the cyclic arrangement of vertices of $X$. Moreover, $|L_W \cup L_X| =k-1$ and $|R_W \cup R_X|=k-1$.
\end{itemize}
\end{remark}

It is also straightforward to prove that $G_{k,p}$ is minimally $k$-connected and minimally $k$-edge-connected using Lemma~\ref{non_triv_components}.

\begin{corollary} \label{minkcon}
Let $k,p$ be integers such that $3 \le k \le p$. Then $G_{k,p}$ is minimally $k$-connected and minimally $k$-edge-connected.
\end{corollary}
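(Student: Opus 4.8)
The plan is to pin down the degree sequence first, then leverage Lemma~\ref{non_triv_components} for the connectivity lower bound, after which both minimality conditions become routine consequences of $k$-regularity. First I would observe that $G_{k,p}$ is $k$-regular: each $w_i$ is adjacent precisely to $x_i,\dots,x_{i+k-1}$, and dually each $x_m$ is adjacent precisely to $w_{m-k+1},\dots,w_m$ (subscripts mod $p$), so every vertex has degree exactly $k$. In particular $\delta(G_{k,p})=\Delta(G_{k,p})=k$, which immediately gives the upper bounds $\kappa(G_{k,p})\le\delta(G_{k,p})=k$ and $\lambda(G_{k,p})\le\delta(G_{k,p})=k$.

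Next I would establish $\kappa(G_{k,p})\ge k$. Since $p\ge k\ge 3$, the graph is not complete (for instance $w_0,w_1$ are nonadjacent), so it suffices to show that every vertex cut has size at least $k$. Suppose for contradiction that $S$ is a vertex cut with $|S|<k$. Pick vertices $u,v$ lying in distinct components of $G_{k,p}-S$; as $u,v\notin S$, these vertices are nonadjacent in $G_{k,p}$ and $S$ is a $u$--$v$ separator. Deleting vertices of $S$ one at a time while a $u$--$v$ separation persists yields a minimal $u$--$v$ separator $S'\subseteq S$ with $|S'|\le|S|<k$. But Lemma~\ref{non_triv_components} asserts that every such minimal separator has size $k$ or $2k-2$, both of which are at least $k$ for $k\ge 3$, a contradiction. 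Hence $\kappa(G_{k,p})=k$. Combining this with the classical inequality $\kappa(G_{k,p})\le\lambda(G_{k,p})\le\delta(G_{k,p})$ and the computation $\delta(G_{k,p})=k$ forces $\lambda(G_{k,p})=k$ as well.

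It then remains to verify the two minimality conditions, both of which follow from $k$-regularity. For an arbitrary edge $e=w_ix_m$ of $G_{k,p}$, the endpoint $w_i$ has degree $k-1$ in $G_{k,p}-e$. On the vertex side, deleting the $k-1$ remaining neighbours of $w_i$ isolates $w_i$ while leaving the other $2p-k>0$ vertices in place, so $\kappa(G_{k,p}-e)\le k-1<k$. On the edge side, the $k-1$ edges still incident with $w_i$ form an edge cut isolating $w_i$, so $\lambda(G_{k,p}-e)\le k-1<k$. Thus $G_{k,p}$ is both minimally $k$-connected and minimally $k$-edge-connected.

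I expect essentially no serious obstacle here, consistent with the claim that this is ``straightforward'': the only genuine content is the lower bound $\kappa(G_{k,p})\ge k$, and Lemma~\ref{non_triv_components} supplies exactly what is needed once an arbitrary small cut is refined to a minimal separator. The two points requiring a little care are (i) confirming that a cut forces a \emph{nonadjacent} separated pair, so that Lemma~\ref{non_triv_components} genuinely applies, and (ii) invoking the inequality $\kappa\le\lambda\le\delta$ to obtain $\lambda(G_{k,p})=k$ rather than re-deriving the edge-connectivity from scratch, which keeps the edge version parallel to and dependent on the vertex case.
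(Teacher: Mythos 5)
Your proposal is correct and takes essentially the same approach as the paper: $k$-regularity gives the upper bound $\kappa(G_{k,p})\le\lambda(G_{k,p})\le k$ and both minimality conditions, while Lemma~\ref{non_triv_components} supplies the lower bound $\kappa(G_{k,p})\ge k$. Your extra care in passing from an arbitrary vertex cut to a minimal separator of a nonadjacent pair (and in invoking $\kappa\le\lambda\le\delta$ for the edge version) merely makes explicit steps that the paper's terser proof leaves implicit.
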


\begin{proof} 
Since $G_{k,p}$ is $k$-regular, we must have $\kappa(G_{k,p})\leq \lambda(G_{k,p})\leq k$.  Now let $S$ be a minimal vertex separator of $G_{k,p}$.  By Lemma~\ref{non_triv_components}, we have $|S|=k$ or $|S|=2k-2$.  Since $k\geq 3$, we have $2k-2>k$, hence $|S|\geq k$.  So $\kappa(G_{k,p})\geq k$, and we conclude that $\kappa(G_{k,p})=\lambda(G_{k,p})=k$.  Finally, since $G_{k,p}$ is $k$-regular, we see that for every edge $e$ of $G_{k,p}$, we have $\kappa(G_{k,p}-e)<k$ and $\lambda(G_{k,p}-e)<k$.  Thus, we conclude that $G_{k,p}$ is minimally $k$-connected and minimally $k$-edge-connected.
\end{proof}

\subsection{Minimally \texorpdfstring{\boldmath$k$}{k}-edge connected graphs}\label{max_lambda_bar}

We show in this subsection that for all $k\geq 3$, there is an infinite family of degree-partitioned minimally $k$-edge-connected graphs whose average edge-connectivity asymptotically achieves the $\frac{9}{8}k$ upper bound established in Section~\ref{preliminaries}.

\begin{theorem} \label{sharpness_9/8_bound_for_edges}
There is an infinite family of degree-partitioned minimally $k$-edge-connected graphs whose average edge-connectivity is asymptotically $\frac{9}{8}k$.
\end{theorem}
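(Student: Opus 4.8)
The plan is to build the family by gluing together three copies of the building block $G_{k,p}$. For $p\ge k\ge 3$, take three vertex-disjoint copies $G^{(1)},G^{(2)},G^{(3)}$ of $G_{k,p}$, with $W$-sets $\{w^{(t)}_0,\dots,w^{(t)}_{p-1}\}$ and $X$-sets $\{x^{(t)}_0,\dots,x^{(t)}_{p-1}\}$, and form a new graph $\Gamma_{k,p}$ by identifying, for each $i$, the three vertices $w^{(1)}_i,w^{(2)}_i,w^{(3)}_i$ into a single vertex $b_i$. The resulting graph is bipartite with parts $B=\{b_0,\dots,b_{p-1}\}$ and $X^{(1)}\cup X^{(2)}\cup X^{(3)}$; each $b_i$ has degree $3k$ and each $x^{(t)}_i$ retains degree $k$, so letting $p\to\infty$ produces an infinite family. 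The choice of three copies is dictated by the extremal analysis: with $c$ copies one gets $n=(c+1)p$ vertices, of which $s=p$ have degree $ck$, and the average edge-connectivity tends to $k+(c-1)k/(c+1)^2$, which is maximized at $c=3$, giving the target $\tfrac{9}{8}k$ and matching $s\approx n/4$ from the proof of Theorem~\ref{LambdaBarBound}.

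Next I would verify that $\Gamma_{k,p}$ is a degree-partitioned minimally $k$-edge-connected graph. Since it is bipartite with the degree-$k$ vertices and the degree-$3k$ vertices as its parts, it is degree-partitioned provided it is minimally $k$-edge-connected. Because every edge is incident with a small vertex (of degree $k$), deleting any edge drops the degree of some vertex below $k$, so $\lambda(\Gamma_{k,p}-e)<k$ for every edge $e$; minimality is thus automatic once $\lambda(\Gamma_{k,p})=k$ is established. For $\lambda(\Gamma_{k,p})=k$, note $\lambda\le k$ since the minimum degree is $k$, and for $\lambda\ge k$ I would argue on an arbitrary edge cut: if $B$ lies entirely on one side of the associated vertex partition, any small vertex on the other side already contributes its $k$ incident edges to the cut; otherwise the partition splits $B$, hence is nontrivial within each layer $G^{(t)}$, and $k$-edge-connectivity of $G_{k,p}$ (Corollary~\ref{minkcon}) forces each layer to contribute at least $k$ edges.

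The heart of the argument is computing the local edge-connectivities. For any pair involving a small vertex $u$, $k$-edge-connectivity gives $\lambda(u,v)\ge k$, while $\lambda(u,v)\le\deg(u)=k$, so $\lambda(u,v)=k$. For a pair of big vertices $b_i,b_j$, the bound $\lambda(b_i,b_j)\le 3k$ is clear, and for the reverse bound I would use the layered structure: within each copy $G^{(t)}$ there are $k$ edge-disjoint $b_i$--$b_j$ paths (by $k$-edge-connectivity of $G_{k,p}$), and since distinct copies share no edges, the union over $t=1,2,3$ yields $3k$ edge-disjoint $b_i$--$b_j$ paths, so $\lambda(b_i,b_j)=3k$. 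With these values in hand, summing gives
\[
K(\Gamma_{k,p})=k\left[\tbinom{4p}{2}-\tbinom{p}{2}\right]+3k\tbinom{p}{2}=k\tbinom{4p}{2}+2k\tbinom{p}{2},
\]
so that
\[
\overline{\lambda}(\Gamma_{k,p})=k+2k\,\frac{\tbinom{p}{2}}{\tbinom{4p}{2}}=k+\frac{k(p-1)}{2(4p-1)}\longrightarrow k+\frac{k}{8}=\frac{9}{8}k
\]
as $p\to\infty$.

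The main obstacle, and the step I would be most careful about, is the lower bound $\lambda(b_i,b_j)\ge 3k$; here the edge-connectivity setting is exactly what makes the layered argument clean, since paths drawn from different copies automatically share no edges (even though they do share the merged hub vertices in $B$). This is precisely where the edge version is easier than its vertex analogue, in which those shared hubs would prevent the three families of paths from being internally disjoint. The remaining checks—that no multiple edges are created in the identification, that each small vertex has $k$ distinct neighbours in $B$ (using $k\le p$), and the cut computation for $k$-edge-connectivity—are routine.
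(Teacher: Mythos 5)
Your proposal is correct and follows essentially the same route as the paper: the glued graph you build is exactly the paper's $\Gamma_{k,p}$ (the paper defines it by an explicit edge set but remarks that it is three copies of $G_{k,p}$ with the $w_i$'s identified), and both proofs compute $\lambda(u,v)=k$ for pairs involving a degree-$k$ vertex and $\lambda(b_i,b_j)=3k$ via $k$ edge-disjoint paths in each of the three edge-disjoint layers, yielding the same value $\left(\tfrac{9p-3}{8p-2}\right)k$. Your extra touches—the explicit edge-cut argument for $\lambda(\Gamma_{k,p})\ge k$ and the optimization over the number $c$ of copies showing $c=3$ is extremal—are sound refinements of details the paper treats more briefly.
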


\begin{proof}
Let $k,p$ be integers such that $3 \le k \le p$.  Let $W=\{w_0,w_1,\ldots, w_{p-1}\}$, and for $m \in \{1,2,3\}$, let $X_m=\{x^{m}_{0}, x^{m}_{1}, \ldots, x^{m}_{p-1}\}$.  Let $\Gamma_{k,p}$ be the graph of order $4p$ with vertex set $W\cup X_1\cup X_2\cup X_3$ and edge set $E_1\cup E_2\cup E_3$, where
\[ 
E_m=\{w_ix^{m}_{i+j}\ |\ 0 \le i \le p-1, 0 \le j \le k-1\}
\]
for $m \in \{1,2,3\}$, and where subscripts are expressed modulo $p$.  For $m\in\{1,2,3\}$, let $H_m=\Gamma_{k,p}[W\cup X_m]$.  Note that $H_m\cong G_{k,p}$ for all $m\in\{1,2,3\}$, and that $H_1$, $H_2$, and $H_3$ are pairwise edge-disjoint.  The graph $\Gamma_{k,p}$ is bipartite with partite sets $W$ and $X=X_1\cup X_2\cup X_3$, and every vertex in $W$ has degree $3k$, while every vertex in $X$ has degree $k$.  (Essentially, the graph $\Gamma_{k,p}$ consists of three copies of $G_{k,p}$, where the three copies of the vertex $w_i$ are identified for all $0\leq i<p$.) By Corollary~\ref{minkcon}, the graph $G_{k,p}$ is $k$-edge-connected, so it follows that $\Gamma_{k,p}$ is $k$-edge-connected.  Further, since every edge of $\Gamma_{k,p}$ is incident to a vertex of degree $k$, we see that $\Gamma_{k,p}$ is minimally $k$-edge-connected.

We now compute the average connectivity of $\Gamma_{k,p}$.  First of all, if $x \in X$ and $v \in V(\Gamma_{k,p})-\{x\}$, then $\lambda(x,v) = k$, since $\Gamma_{k,p}$ is $k$-edge-connected and $\deg (x) =k$.  If $w_i,w_j \in W$ for $i \ne j$, then $\lambda(w_i,w_j) = 3k$, since there are $k$ edge-disjoint $w_i$--$w_j$ paths in each of the edge-disjoint subgraphs $H_1$, $H_2$ and $H_3$.  Thus the average edge-connectivity of $\Gamma_{k,p}$ is given by $$\frac{3k\binom{p}{2} + k [\binom{4p}{2} -\binom{p}{2}]}{\binom{4p}{2}}=\left(\frac{9p-3}{8p-2}\right)k,$$ which is asymptotically $\frac{9}{8}k$.
\end{proof}

\subsection{Minimally \texorpdfstring{\boldmath$k$}{k}-connected graphs}\label{max_kappa_bar}

We show in this subsection that for all $k\geq 3$, there is an infinite family of degree-partitioned minimally $k$-connected graphs whose average connectivity asymptotically achieves the $\frac{9}{8}k$ upper bound established in Section~\ref{preliminaries}.  

Before we proceed, we explain why we need a different construction than the graph $\Gamma_{k,p}$ described in the proof of Theorem~\ref{sharpness_9/8_bound_for_edges}.  After all, it is not hard to see that the graph $\Gamma_{k,p}$ is minimally $k$-connected.  But while the average edge-connectivity of $\Gamma_{k,p}$ is asymptotically $\frac{9}{8}k$, the average connectivity of $\Gamma_{k,p}$ is not.   For suppose that $k\geq 3$ and $p\geq 2k$.  Then $\{w_1,w_2, \ldots, w_{k-1}\} \cup \{w_{p-k+1}, w_{p-k+2}, \ldots, w_{p-1}\}$ is a vertex separator for $w_0$ and $w_j$ for all $j\in\{k,k+1,\ldots, p-k\}$. By symmetry, we have $\kappa(w_i,w_{i+j}) \le 2k-2< 3k$ for all $i\in\{0,1,\ldots, p-1\}$ and all $j\in\{k,k+1,\ldots, p-k\}$.  It follows that the average connectivity of $\Gamma_{k,p}$ is not asymptotically $\frac{9}{8}k$.

So we define different families of degree-partitioned minimally $k$-connected graphs for which the upper bound given in Theorem~\ref{KappaBarBound} is attained asymptotically.  We require two slightly different constructions; one for $k\in\{3,4,5\}$, where we compute the average connectivity by constructing internally disjoint paths, and another for $k\geq 6$, where we compute the average connectivity by considering vertex separators.  We will explain why neither of these two approaches can be easily adapted to work for all $k\geq 3$.  We begin by considering $k\in\{3,4,5\}$.

\begin{theorem}\label{upperbound_sharp}
If $k\geq 3$, then there is an infinite family of degree-partitioned minimally $k$-connected graphs whose average connectivity is asymptotically $\frac{9}{8}k$.
\end{theorem}

\begin{proof} For $k \in \{3,4,5\}$, the proof is completed using a computer algebra system. The details are omitted here but are included in Appendix \ref{cases345}.

Assume now that $k\geq 6$ is fixed and let $p \in \{rk^2-1\ |\ r \in \{k+1, k+2, \ldots\}\}$. Let $\pi_1$ and $\pi_2$ be functions from the set $\{0,1,2, \ldots, p-1\}$ to itself defined by 
\[\pi_1(i) = ki\]
and 
\[\pi_2(i) = k^2i\]
 for $0 \le i \le p-1$, and where the output in either case is expressed modulo $p$. By our choice of $p$, we see that the sequence of integers $0,k, 2k, \ldots ,(p-1)k$ modulo $p$ produced by $\pi_1$ contains $p$ distinct elements, and the sequence $0,k^2, 2k^2, \ldots, (p-1)k^2$ of integers modulo $p$ produced by $\pi_2$ also contains $p$ distinct elements. So $\pi_1$ and $\pi_2$ are permutations of the set $\{0,1, \ldots, p-1\}$. 

Let
\begin{align*}
W&=\{w_0,w_1,\ldots,w_{p-1}\},\\
X&=\{x_0,x_1,\ldots,x_{p-1}\},\\
Y&=\{y_0,y_1,\ldots,y_{p-1}\}, \text{ and}\\
Z&=\{z_0,z_1,\ldots,z_{p-1}\}.
\end{align*}
Let $\Phi_{k,p}$ be the graph of order $4p$ with vertex set $W\cup X\cup Y\cup Z$ and edge set $E_X\cup E_Y\cup E_Z$, where
\begin{align*}
E_X&=\{w_ix_{i+j}\ |\ 0 \le i \le p-1, 0\leq j\leq k-1\},\\
E_Y&=\{w_{\pi_1(i)}y_{i+j}\ |\ 0 \le i \le p-1, 0\leq j\leq k-1\}, \text{ and}\\
E_Z&=\{w_{\pi_2(i)}z_{i+j}\ |\ 0 \le i \le p-1, 0\leq j\leq k-1\},
\end{align*}
where subscripts are expressed modulo $p$.  

Let 
\begin{align*}
H_X&=\Phi_{k,p}[W\cup X],\\
H_Y&=\Phi_{k,p}[W\cup Y], \text{ and}\\
H_Z&=\Phi_{k,p}[W\cup Z].
\end{align*}
Note that $H_X$, $H_Y$, and $H_Z$ are isomorphic to $G_{k,p}$.  
 
 We first show that $\Phi_{k,p}$ is degree-partitioned minimally $k$-connected.  Let $S$ be any subset of at most $k-1$ vertices of $\Phi_{k,p}$. We will show that $\Phi_{k,p}-S$ is connected.  Since $H_X$ is isomorphic to $G_{k,p}$, it is $k$-connected by Corollary~\ref{minkcon}.  Therefore, the graph $H_X-S$ is connected.  Let $v$ be any vertex in $Y\cup Z$ that is not in $S$.  Then $v$ has $k$ neighbours in $\Phi_{k,p}$, all of which belong to $W\subseteq V(H_X)$.  At most $k-1$ of these neighbours belong to $S$, so $v$ is joined to some vertex of $H_X-S$.  It follows that $\Phi_{k,p}-S$ is connected, and hence $\Phi_{k,p}$ is $k$-connected.  Note that $\Phi_{k,p}$ is bipartite with partite sets $W$ and $X\cup Y\cup Z$, and that every vertex in $W$ has degree $3k$, while every vertex in $X\cup Y\cup Z$ has degree $k$.  We conclude that $\Phi_{k,p}$ is degree-partitioned minimally $k$-connected.


It remains to be shown that $\kappa(u,v)=3k$ for every pair of distinct vertices $u,v\in W$.  Since $u$ and $v$ both have degree $3k$, we certainly have $\kappa(u,v)\leq 3k$.  So it suffices to show that $|S|\geq 3k$ for every vertex separator $S$ of $u$ and $v$.  Let $S$ be a vertex separator of $u$ and $v$, and let $S_X$, $S_Y$, and $S_Z$ denote the sets $S\cap V(H_X)$, $S\cap V(H_Y)$, and $S\cap V(H_Z)$, respectively.  Note that $S_X$, $S_Y$ and $S_Z$ separate $u$ and $v$ in $H_X$, $H_Y$, and $H_Z$, respectively.  Let $T_X\subseteq S_X$, $T_Y\subseteq S_Y$ and $T_Z\subseteq S_Z$ be minimal separators of $u$ and $v$ in $H_X$, $H_Y$ and $H_Z$, respectively.  Note that we have 
\[
|S|=|S_X \cup S_Y \cup S_Z|\geq |T_X \cup T_Y \cup T_Z|.
\]
We will use the principle of inclusion and exclusion to show that $|T_X\cup T_Y\cup T_Z|\geq 3k$.

Let $\mathcal{T}=\{T_X,T_Y,T_Z\}$, and let $T\in\mathcal{T}$.  First of all, by Lemma~\ref{non_triv_components}, we have $|T|=k$ or $|T|=2k-2$.  Since $k\geq 6$, we have $2k-2>k$, so $|T|\geq k$.  Further, by Remark~\ref{Remark:VertexSeparators}, if $|T|=k$, then $T$ is the neighbourhood of $u$ or $v$ in the subgraph $H_X$, $H_Y$, or $H_Z$ corresponding to $T$, and since $u,v\in W$, we see that $T\cap W=\emptyset$ in this case.

We show now that if two distinct sets in $\mathcal{T}$ have nonempty intersection, then they both have cardinality $2k-2$, and their intersection has cardinality at most four.  Suppose first that $T_X\cap T_Y\neq \emptyset$.  Since $T_X\subseteq W\cup X$ and $T_Y\subseteq W\cup Y$, we see that $T_X\cap T_Y\subseteq W$.  Thus, from the previous paragraph, we must have $|T_X|=|T_Y|=2k-2$.  Further, by Remark~\ref{Remark:VertexSeparators}, we have 
\[
T_X\cap W\subseteq \{w_{a},w_{a+1},\ldots,w_{a+k-2}\}\cup \{w_{b},w_{b+1},\ldots,w_{b+k-2}\}\]
for some $a,b\in\{0,1,\ldots,p-1\}$, and
\begin{align*}
T_Y\cap W&\subseteq \{w_{\pi_1(c)},w_{\pi_1(c+1)},\ldots, w_{\pi_1(c+k-2)}\}\cup \{w_{\pi_1(d)},w_{\pi_1(d+1)},\ldots, w_{\pi_1(d+k-2)}\}\\
&=\{w_{kc},w_{kc+k},\ldots, w_{kc+k(k-2)}\}\cup \{w_{kd},w_{kd+k},\ldots, w_{kd+k(k-2)}\}
\end{align*}
for some $c,d\in\{0,1,\ldots,p-1\}$.  Since each of the sets $\{w_{a},w_{a+1},\ldots,w_{a+k-2}\}$ and $\{w_{b},w_{b+1},\ldots,w_{b+k-2}\}$ overlaps with each of the sets $\{w_{kc},w_{kc+k},\ldots, w_{kc+k(k-2)}\}$ and $\{w_{kd},w_{kd+k},\ldots, w_{kd+k(k-2)}\}$ in at most one vertex, we have $|T_X\cap T_Y|\leq 4$.  The arguments for $T_X\cap T_Z$ and $T_Y\cap T_Z$ are similar, and are omitted.

We now show that $|T_X\cup T_Y\cup T_Z|\geq 3k$ by considering several cases.
\begin{itemize}
\item If the sets in $\mathcal{T}$ are pairwise disjoint, then they each have cardinality at least $k$, and it follows immediately that $|T_X\cup T_Y\cup T_Z|\ge 3k$.
\item If exactly one pair of sets from $\mathcal{T}$ has nonempty intersection, then both of these sets have cardinality $2k-2$, and they overlap in at most four vertices.  Further, they are disjoint from the third set, which has cardinality at least $k$.  Thus, by the principle of inclusion and exclusion, we have 
\[
|T_X \cup T_Y \cup T_Z| \ge 2(2k-2)-4+k=5k-8>3k,  
\]
where we used the fact that $k\geq 6$ at the end.
\item If all pairs of sets in $\mathcal{T}$ have nonempty intersection, then all of the sets in $\mathcal{T}$ have cardinality $2k-2$, and each pair overlaps in at most four vertices.  Thus, by the principle of inclusion and exclusion, we have 
\[
|T_X\cup T_Y\cup T_Z|\ge 3(2k-2)-3(4)=6k-18\geq 3k,
\]
where we used the fact that $k\geq 6$ at the end.
\end{itemize}
We conclude in all cases that $|S|\geq |T_X\cup T_Y\cup T_Z|\geq 3k$. Therefore, we have $\kappa(u,v)=3k$, which completes the proof.
\end{proof}

\section{Conclusion}

The obvious open problem is to resolve Conjecture~\ref{DegreePartitionedConjecture}, which states that if $G$ is an optimal minimally $k$-(edge-)connected graph of order $n\geq 2k+1$ for some $k \ge 3$, then $G$ is degree-partitioned.  We showed that if this conjecture is true, then the average (edge-)connectivity of a minimally $k$-(edge-)connected graph is at most $\frac{9}{8}k$, and we constructed degree-partitioned minimally $k$-(edge-)connected graphs which attain this upper bound asymptotically.

\begin{appendices}
\section{}\label{cases345}

\noindent{\em Proof of  Theorem~\ref{upperbound_sharp} for $k \in \{3,4,5\}$, i.e., 
if $k\in\{3,4,5\}$, then there is an infinite family of degree-partitioned minimally $k$-connected graphs whose average connectivity is asymptotically $\frac{9}{8}k$.}

Fix $k\in\{3,4,5\}$, and define $s=k^3-k^2$.  Let $p \geq 4s$, and define
\begin{align*}
W&=\{w_0,w_1,\ldots,w_{p-1}\},\\
X&=\{x_0,x_1,\ldots,x_{p-1}\},\\
Y&=\{y_0,y_1,\ldots,y_{p-1}\}, \text{ and}\\
Z&=\{z_0,z_1,\ldots,z_{p-1}\}.
\end{align*}
Let $\Psi_{k,p}$ be the graph of order $4p$ with vertex set $W\cup X\cup Y\cup Z$ and edge set $E_X\cup E_Y\cup E_Z$, where
\begin{align*}
E_X&=\{w_ix_{i+j}\ |\ 0 \le i \le p-1, 0\leq j\leq k-1\},\\
E_Y&=\{w_iy_{i+kj}\ |\ 0 \le i \le p-1, 0\leq j\leq k-1\}, \text{ and}\\
E_Z&=\{w_iz_{i+k^2j}\ |\ 0 \le i \le p-1, 0\leq j\leq k-1\},
\end{align*}
where subscripts are expressed modulo $p$.  


In a manner similar to the proof that $\Phi_{k,p}$ is degree-partitioned minimally $k$-connected, it can be shown that this also holds for $\Psi_{k,p}$.

We now compute $\kappa(u,v)$ for every pair $u,v$ of distinct vertices of $\Psi_{k,p}$.  First of all, if $u$ and $v$ are not both in $W$, then $\kappa(u,v)=k$, since $\Psi_{k,p}$ is $k$-connected and at least one of $u$ or $v$ must have degree $k$.  So it remains to consider the case that $u,v\in W$.

\medskip

\noindent
\textbf{Claim:} If $u,v\in W$, then $\kappa(u,v)=3k$.

\begin{proof}[Proof of Claim]
By relabelling vertices if necessary, we may assume that $u=w_0$ and $v=w_t$, where $1\leq t\leq \frac{p}{2}$.  For every pair of integers $i$ and $j$ with $i\leq j$, we define
\begin{align*}
W[i,j]&=\{w_i,w_{i+1},\ldots,w_j\},\\
X[i,j]&=\{x_i,x_{i+1},\ldots,x_j\},\\
Y[i,j]&=\{y_i,y_{i+1},\ldots,y_j\}, \text{ and}\\
Z[i,j]&=\{z_i,z_{i+1},\ldots,z_j\}, 
\end{align*}
where the subscripts are taken modulo $p$.  We also define $\Psi_{k,p}[i,j]$ to be the subgraph of $\Psi_{k,p}$ induced by the set $W[i,j]\cup X[i,j]\cup Y[i,j]\cup Z[i,j]$.  

Recall that $s=k^3-k^2$.  We will show that there is a collection of $3k$ internally disjoint paths from $w_0$ to $w_t$, all of whose vertices belong to the subgraph $\Psi_{k,p}[-k^2,t+s]$.  (Since $p\geq 4s$ and $t\leq \frac{p}{2}$, this is a proper subgraph of $\Psi_{k,p}$.)

First of all, if $t< 2s$, then we verify by computer that such a collection of $3k$ internally disjoint $w_0$--$w_t$ paths exists.\footnote{A Sage Jupyter Notebook containing the code used to perform this check, and the other checks in the remainder of this proof, can be found at \href{http://ion.uwinnipeg.ca/~lmol/Research}{http://ion.uwinnipeg.ca/$\sim$lmol/Research}.}   (Since $k\in \{3,4,5\}$ and $1\leq t<2s$, there are only finitely many cases to check.)   

\begin{figure}
\begin{center}
\begin{tikzpicture}[scale=0.8]
\useasboundingbox (-2.5,-0.5) rectangle (16.6,2.55);
\foreach \x in {0,1,3,4,10,11}
{
\vertex (\x) at (\x,0) {};
\pgfmathtruncatemacro{\y}{\x+3}
\vertex (z\y) at (\y,2) {};
\path (\x) edge (z\y);
}
\foreach \x in {6,7}
{
\vertex (\x) at (\x,0) {};
\pgfmathtruncatemacro{\y}{\x+4}
\vertex (z\y) at (\y,2) {};
}
\foreach \x in {3,4,6,7,10,11}
{
\path (\x) edge (z\x);
}
\foreach \x in {0.55,3.55,6.55,10.55}
{
\node at (\x,0) {\scriptsize $\cdots$};
}
\foreach \x in {3.55,6.55,10.55,13.55}
{
\node at (\x,2) {\scriptsize $\cdots$};
}
\foreach \x in {2.1,3.55,5.1,6.55,10.55,12.1}
{
\node at (\x,1) {\scriptsize $\cdots$};
}
\foreach \x in {6,7}
{
\draw (\x) -- (\x+0.75,0.5);
}
\foreach \x in {10,11}
{
\draw (z\x) -- (\x-0.75,1.5);
}
\node[below] at (0) {\tiny $w_{r+1}$};
\node[below] at (1) {\tiny $w_{r+3k}$};
\node[below] at (3) {\tiny $w_{r+s+1}$};
\node[below,xshift=8pt] at (4) {\tiny $w_{r+s+3k}$};
\node[below] at (6) {\tiny $w_{r+2p+1}$};
\node[below,xshift=10pt] at (7) {\tiny $w_{r+2s+3k}$};
\node[below,xshift=-5pt] at (10) {\tiny $w_{t-2s+1}$};
\node[below,xshift=5pt] at (11) {\tiny $w_{t-2s+3k}$};
\node[above] at (z3) {\tiny $z_{r+s+1}$};
\node[above,xshift=8pt] at (z4) {\tiny $z_{r+s+3k}$};
\node[above] at (z6) {\tiny $z_{r+2p+1}$};
\node[above,xshift=10pt] at (z7) {\tiny $z_{r+2s+3k}$};
\node[above,xshift=-5pt] at (z10) {\tiny $z_{t-2s+1}$};
\node[above,xshift=5pt] at (z11) {\tiny $z_{t-2s+3k}$};
\node[above] at (z13) {\tiny $z_{t-s+1}$};
\node[above,xshift=5pt] at (z14) {\tiny $z_{t-s+3k}$};
\vertex (w0) at (-1.5,0) {};
\node[below] at (w0) {\tiny $w_0$};
\vertex (wt) at (15.5,0) {};
\node[below] at (wt) {\tiny $w_t$};
\path[dashed] (w0) edge[bend left=90,looseness=2] (0);
\path[dashed] (w0) edge[out=135,in=45,looseness=4] (1);
\path[dashed] (z13) edge[out=-90,in=180] (wt);
\path[dashed] (z14) edge[out=0,in=30,looseness=2] (wt);
\node at (0,1.5) {$\mathcal{P}$};
\node at (-0.75,1.75) {$\vdots$};
\node at (14.6,1.2) {$\mathcal{R}$};
\node at (15.2,0.5) {$\cdots$};
\node at (8.55,1) {$\cdots$};
\node at (8.5,1.5) {$\mathcal{Q}$};
\draw[dotted] (-2.5,-0.5) rectangle (2.3,2.55);
\draw[dotted] (12.4,-0.5) rectangle (16.6,2.55);
\draw[dotted] (2.3,-0.5) -- (12.4,-0.5);
\draw[dotted] (2.3,2.55) -- (12.4,2.55);
\end{tikzpicture}
\end{center}
\caption{A collection of $3k$ internally disjoint $w_0$--$w_t$ paths in $\Psi_{k,p}[-k^2,t+s]$ for $t\geq 2s$, where $r= t\bmod s$.  The dotted box on the left contains the subgraph $\Psi_{k,p}[-k^2,r+s]$, the dotted box in the middle contains the subgraph $\Psi_{k,p}[r+s+1,t-s]$, and the dotted box on the right contains the subgraph $\Psi_{k,p}[t-s+1,t+s]$.}
\label{Fig:ThreeSegments}
\end{figure}
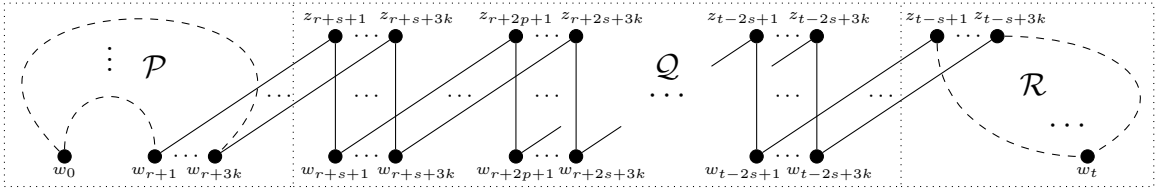

So we may assume that $t\geq 2s$.  Let $r= t\bmod s$.  We describe the collection of $3k$ internally disjoint $w_0$--$w_t$ paths in three separate segments -- see Figure~\ref{Fig:ThreeSegments}.
\begin{itemize}
\item We verify by computer that there is a collection $\mathcal{P}$ of $3k$ internally disjoint paths $P_1,\ldots, P_{3k}$ originating from $w_0$ and ending in the vertices $w_{r+1},\ldots,w_{r+3k}$, respectively, all of whose vertices belong to the subgraph $\Psi_{k,p}[-k^2,r+s]$.  (Again, since $k\in\{3,4,5\}$ and $0\leq r<s$, there are only finitely many cases to check.)
\item By definition of $\Psi_{k,p}$, the vertex $w_i$ is adjacent to both $z_i$ and $z_{i+s}$ for all $i$. We define a collection of paths by
\[
\mathcal{Q}=\{w_{r+j}z_{r+s+j}w_{r+s+j}z_{r+2s+j}...w_{t-2s+j}z_{t-s+j}\ |\ 1\leq j\leq 3k\}.
\]
(We use the fact that $r=t\bmod s$ here.)  Observe that $\mathcal{Q}$ is a collection of $3k$ internally disjoint paths $Q_1,\ldots, Q_{3k}$ originating from the vertices $w_{r+1},\ldots,w_{r+3k}$, respectively, and ending in the vertices $z_{t-s+1},\ldots,z_{t-s+3k}$, respectively, all of whose internal vertices belong to the subgraph $\Psi_{k,p}[r+s+1,t-s+3k]$.
\item We verify by computer that there is a collection $\mathcal{R}$ of $3k$ internally disjoint paths $R_1,\ldots,R_{3k}$ originating from the vertices $z_{t-s+1},\ldots, z_{t-s+3k}$, respectively, and ending in the vertex $w_t$, all of whose vertices belong to the subgraph $\Psi_{k,p}[t-s+1,t+s]$.
\end{itemize}
By concatenating the paths in $\mathcal{P}$, $\mathcal{Q}$, and $\mathcal{R}$ in the obvious manner, we obtain a collection of $3k$ internally disjoint $w_0$--$w_t$ paths in $\Psi_{k,p}[-k^2,t+s]$. 
\end{proof}
We conclude that $\Psi_{k,p}$ is degree-partitioned minimally $k$-connected, and that the average connectivity of $\Psi_{k,p}$ is given by $$\frac{3k\binom{p}{2} + k [\binom{4p}{2} -\binom{p}{2}]}{\binom{4p}{2}}=\left(\frac{9p-3}{8p-2}\right)k,$$ which is asymptotically $\frac{9}{8}k$.


\end{appendices}


\end{document}